\newtheorem{theorem}{Theorem}[section]
\newtheorem{question}[theorem]{Question}
\numberwithin{equation}{section}
\theoremstyle{definition}
\newtheorem{remark}[theorem]{Remark}
\newcommand{\R}{\mathbb{R}}
\newcommand{\cC}{{\mathcal C}}
\newcommand{\cH}{{\mathcal H}}   
\newcommand{\cJ}{{\mathcal J}}
\newcommand{\cN}{{\mathcal N}}
\def\r{\mathbb{R}}
\def\rn{\mathbb{R}^N}
\def\s1{\mathbb{S}^1}
\def\n{\mathbb{N}}
\def\io{\int_{\Omega}}
\def\irn{\int_{\r^N}}
\def\vp{\varphi}
\def\o{\Omega}
\def\bf{\boldsymbol}
\def\cC{\mathcal{C}}
\def\cH{\mathcal{H}}
\def\cJ{\mathcal{J}}
\def\cN{\mathcal{N}}
\def\F{\mathrm{Fix}}
\def\d{\,\mathrm{d}}
\def\supp{\text{supp}}
\def\diver{\textnormal{div}}
\newcommand{\eps}{\varepsilon}
\def\dx{\,\textnormal{d}x}
\author{Mónica Clapp\footnote{M. Clapp was supported by CONAHCYT (Mexico) through the research grant A1-S-10457.}\and Víctor Hernández-Santamaría\footnote{V. Hern\'andez-Santamar\'ia was supported by the program ``Estancias Posdoctorales por México para la Formación y Consolidación de las y los Investigadores por México'' of CONAHCYT (Mexico). He also received support from Project A1-S-10457 of CONAHCYT and by UNAM-DGAPA-PAPIIT grants IN109522, IN102925, and IA100324 (Mexico).} \and Alberto Saldaña\footnote{A. Saldaña was supported by UNAM-DGAPA-PAPIIT (Mexico) grants IN102925 and IA100923 and by CONAHCYT (Mexico) research grant A1-S-10457.}}
\title{A strong unique continuation property for weakly coupled elliptic systems}
\date{}
\begin{document}

\maketitle

\abstract{
We establish the validity of a strong unique continuation property for weakly coupled elliptic systems, including competitive ones.  Our proof exploits the system-structure of the problem and Carleman estimates. Then, we use our unique continuation theorems to show two nonexistence results. The first one states the nonexistence of nontrivial solutions to a  weakly coupled elliptic system with a critical nonlinearity and Dirichlet boundary condition on starshaped domains, whereas the second one yields nonexistence of symmetric least energy solutions for a critical system in more general domains.
}

\bigskip
\bigskip

\noindent\textsc{Keywords:} Unique continuation, Carleman estimates, competitive systems, nonexistence for critical systems.
\medskip

\noindent\textsc{MSC2020:}
35B60 · 
35A23 · 
35J47 · 
35B33 · 
35A01 · 
35B06  
\medskip

\section{Introduction}

Consider the weakly coupled elliptic system
\begin{equation}\label{eq:sys_nonlin}
    \begin{cases}
        -\Delta u_i + V_i(x) u_i = \sum_{j=1}^{\ell}\beta_{ij}|u_j|^p|u_i|^{p-2}u_i, \\
        u_i\in H^1_{loc}(\Omega), \quad i=1,\ldots,\ell,
    \end{cases}
\end{equation}
where
$\o$ is a connected open subset of $\rn$, $N\geq 3$, $V_i\in L^{N/2}_{loc}(\Omega)$, $\beta_{ij}\in L^\infty_{loc}(\Omega)$, $1<p\leq \frac{2^*}{2}$, and $2^*:=\frac{2N}{N-2}$ is the critical Sobolev exponent. We investigate the validity of the following statement:
\begin{flushleft}\label{uwcp}
\textbf{(WUCP)} If $\bf u=(u_1,\ldots,u_\ell)$ is a solution of \eqref{eq:sys_nonlin} and $\bf u\equiv 0$ in some open subset of $\Omega$, then $\bf u\equiv 0$ in $\Omega$.
\end{flushleft}

\noindent This is the \textit{weak unique continuation property} for the system \eqref{eq:sys_nonlin}. A classical result of Jerison and Kenig \cite[Theorem 6.3]{JK85} says that this property holds true for a single equation, i.e., when $\ell=1$. In fact, they prove that it suffices that the solution vanishes of infinite order at some point. A function $u:\Omega\to\R$ is said to \emph{vanish of infinite order at a point $x_0\in \Omega$} if $u\in L^2_{loc}(\o)$ and
\begin{equation}\label{eq:vanish}
\lim_{r\to 0^+}r^{-m}\int_{B_r(x_0)}|u(x)|^2\dx=0\qquad \text{for all \ }m\in\mathbb N.
\end{equation} 
Our main result says that the system \eqref{eq:sys_nonlin} satisfies the \emph{strong unique continuation property}, i.e.,
\begin{flushleft}
\textbf{(SUCP)} If $\bf u=(u_1,\ldots,u_\ell)$ is a solution of \eqref{eq:sys_nonlin} and every component $u_i$ vanishes of infinite order at some $x_0\in \Omega$, then $\bf u\equiv 0$ in $\Omega$.
\end{flushleft}

\noindent Note that (SUCP) implies (WUCP).

If $p\geq 2$ it is easy to see that the system \eqref{eq:sys_nonlin} satisfies (SUCP). In fact, even a stronger (component-wise) unique continuation property holds true. Namely, if $p\geq 2$,  $\bf u=(u_1,\ldots,u_\ell)$ is a solution to the system \eqref{eq:sys_nonlin}, and some component $u_i$ vanishes of infinite order at a point $x_0\in\Omega$, then $u_i$ vanishes in $\Omega$. This is an easy consequence of the result of Jerison and Kenig. Indeed, rewriting the equation \eqref{eq:sys_nonlin} as
\begin{equation*}
-\Delta u_i=\left(-V_i+\sum_{j=1}^\ell\beta_{ij}|u_j|^p|u_i|^{p-2}\right)u_i,
\end{equation*}
and noting that, since $2\leq p\leq\frac{N}{N-2}$,
\begin{equation}\label{1}
(|u_j|^p|u_i|^{p-2})^\frac{N}{2}\leq\left(\sum_{k=1}^\ell |u_k|\right)^{(2p-2)\frac{N}{2}}\leq 1+\left(\sum_{k=1}^\ell |u_k|\right)^{2^*}\in L^1_\mathrm{loc}(\Omega)
\end{equation}
for each pair $i,j$, we see that $-V_i+\sum_{j=1}^\ell\beta_{ij}|u_j|^p|u_i|^{p-2}\in L^\frac{N}{2}_\mathrm{loc}(\Omega)$. So, if $u_i$ vanishes of infinite order at a point $x_0\in\Omega$ for some $i$, then \cite[Theorem 6.3]{JK85} states that $u_i$ vanishes in $\Omega$. Note however that, since $p\leq\frac{N}{N-2}$, the inequality $p\geq 2$ is only possible in dimensions $N=3,4.$ 

The question now is whether (SUCP) also holds for $p\in (1,2)$ (which is always the case for $N\geq 5$).  First, note that the previous argument cannot be used, because \eqref{1} does not hold for $p\in(1,2)$ (in fact, $|u_i|^{p-2}$ is not even well defined on $\{u_i=0\}$).   Furthermore, the factor $|u_i|^{p-2}u_i$ in the coupling term is sublinear when $j\neq i$, and this is known to cause unique continuation to fail in some cases. For instance, as observed in \cite{SW18,Rul18}, the ODE 
\begin{align}\label{ce}
-u^{\prime\prime}+|u|^{r-2}u=0,\qquad r\in(1,2),
\end{align}
has the explicit solution
\begin{equation*}
u(t)=\begin{cases}
\left(\frac{2r}{(2-r)^2}\right)^{\frac{1}{r-2}}(t-t_0)^{\frac{2}{2-r}} &\textnormal{for }t>t_0,\\
0 &\textnormal{for }t\leq t_0,
\end{cases}
\end{equation*}
for any $t_0 \in \mathbb R$, so \eqref{ce} does not satisfy any kind of unique continuation. Here two elements play an important rôle: the sublinear nonlinearity and its sign.  Indeed, it is shown in \cite[Theorem 1.1]{SW18} (see also \cite{Rul18}) that if the nonlinear term has the opposite sign, then the (SUCP) holds true, namely, if $u$ satisfies
\begin{equation*}
-\Delta u - |u|^{r-2} u=0, \qquad r\in(1,2),
\end{equation*}
and $u$ vanishes in an open subset, then it vanishes everywhere. The case $r=1$ (which is of interest in free boundary problems) is studied in \cite{st18}.

Note however that, as pointed out in \cite{ACS23}, even though the sublinear term $|u_i|^{p-2}u_i$ appears in each equation, the system as a whole has an underlying superlinear structure that can be used. We shall take advantage of this fact to prove the following result.

\begin{theorem}\label{thm:main}
The system \eqref{eq:sys_nonlin} satisfies the strong unique continuation property \emph{(SUCP)}.
\end{theorem}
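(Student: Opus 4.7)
My plan is to prove a local version of (SUCP) via a Carleman estimate tailored to the system, and then propagate the vanishing by a connectedness argument. The crucial algebraic observation, alluded to by the authors, is the following: writing $U:=\sum_{k=1}^\ell|u_k|$, the coupling nonlinearity in the $i$-th equation is controlled pointwise by $CU^{2p-1}$, which factors as $U\cdot U^{2p-2}$. The standing assumption $p\leq 2^*/2$ is exactly $(2p-2)\tfrac{N}{2}=(p-1)N\leq 2^*$, so since $H^1_{loc}(\Omega)\hookrightarrow L^{2^*}_{loc}(\Omega)$ gives $U\in L^{2^*}_{loc}(\Omega)$, the effective ``potential'' $U^{2p-2}$ lies in $L^{N/2}_{loc}(\Omega)$ and is arbitrarily small in $L^{N/2}$ on sufficiently small balls. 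This is what reduces the sublinear system to a scalar Jerison--Kenig-type Carleman problem.

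Assume without loss of generality $x_0=0$. The main analytic input is the Jerison--Kenig $L^p$--$L^q$ Carleman inequality underlying \cite[Theorem~6.3]{JK85}: there exist a sequence $\tau_k\to\infty$ and $C>0$ with
\[
\bigl\||x|^{-\tau_k}v\bigr\|_{L^{2^*}(\rn)} \;\leq\; C\,\bigl\||x|^{-\tau_k}\Delta v\bigr\|_{L^{(2^*)'}(\rn)},\qquad v\in C_0^\infty(\rn\setminus\{0\}),
\]
admissible because $\tfrac{1}{(2^*)'}-\tfrac{1}{2^*}=\tfrac{2}{N}$. Fix $r>0$ with $B_{2r}\subset\Omega$ and a smooth cutoff $\eta$ equal to $1$ on $B_{r/2}$ and supported in $B_r$. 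Applying the estimate to $\eta u_i$, substituting $\Delta u_i = V_iu_i - \sum_j\beta_{ij}|u_j|^p|u_i|^{p-2}u_i$ from \eqref{eq:sys_nonlin}, and using H\"older with the pair $(2^*,N/2)$ via $U^{2p-1}=U\cdot U^{2p-2}$, one obtains
\[
\bigl\||x|^{-\tau_k}\eta u_i\bigr\|_{L^{2^*}} \leq C\Bigl(\|V_i\|_{L^{N/2}(B_r)}+\|U\|_{L^{(p-1)N}(B_r)}^{2p-2}\Bigr)\bigl\||x|^{-\tau_k}\eta U\bigr\|_{L^{2^*}}+R_i(\tau_k),
\]
where $R_i(\tau_k)=C\bigl\||x|^{-\tau_k}[\Delta,\eta]u_i\bigr\|_{L^{(2^*)'}}$ is a commutator remainder supported in $B_r\setminus B_{r/2}$.

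Summing over $i=1,\dots,\ell$ and choosing $r$ small enough that the prefactor is $<\tfrac{1}{2}$ (possible by the discussion above), the $U$-term is absorbed on the left. Restricting the left-hand side to $B_{r/4}$, where $|x|^{-\tau_k}\geq (r/4)^{-\tau_k}$, and using $|x|^{-\tau_k}\leq (r/2)^{-\tau_k}$ on $\supp[\Delta,\eta]$, the inequality simplifies to $\|U\|_{L^{2^*}(B_{r/4})}\leq C\cdot 2^{-\tau_k}$ with $C$ independent of $k$. Letting $\tau_k\to\infty$ yields $\mathbf{u}\equiv 0$ on $B_{r/4}$. Globalising is standard: the set $A:=\{x\in\Omega:\mathbf{u}\equiv 0\text{ in a neighbourhood of }x\}$ is open and nonempty, and it is closed in $\Omega$ because at any $y\in\overline{A}\cap\Omega$ every $u_i$ vanishes of infinite order, so the local argument applies at $y$; connectedness of $\Omega$ then forces $A=\Omega$.

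The main obstacle will be the careful bookkeeping around the origin: the Carleman estimate requires test functions supported away from $0$, so an additional inner cutoff $\chi_\rho$ vanishing on $B_\rho$ must be inserted and one must pass to $\rho\to 0$ before sending $\tau_k\to\infty$. Controlling the inner-commutator contribution is exactly where the infinite-order vanishing of each $u_i$ is used, in tandem with an annular Caccioppoli-type bound that converts $\nabla u_i$ into higher-order powers of $\rho$. This technical passage aside, the algebraic heart of the argument rests solely on the factorisation $U^{2p-1}=U\cdot U^{2p-2}$ with $U^{2p-2}\in L^{N/2}_{loc}$, which holds precisely in the given range $1<p\leq 2^*/2$ and turns the apparent obstruction of a sublinear nonlinearity into a small linear perturbation.
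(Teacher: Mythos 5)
Your proposal is correct and follows essentially the same route as the paper: the authors likewise reduce the system to the differential inequalities $|\Delta u_i|\leq |W|\,|\mathbf u|$ with $W=|V_i|+\sum_j|\beta_{ij}||\mathbf u|^{2p-2}\in L^{N/2}_{loc}$ (exactly your factorisation $U^{2p-1}=U\cdot U^{2p-2}$), and then run the Jerison--Kenig $L^{q}$--$L^{2^*}$ Carleman estimate with inner and outer cutoffs, summing over components to absorb the potential term on small balls, using the infinite-order vanishing (plus an interpolation bound for $\nabla u_i$ playing the role of your annular Caccioppoli estimate) to kill the inner commutator, sending the Carleman parameter to infinity, and concluding by connectedness. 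The only detail worth adding is the elliptic-regularity step showing $u_i\in W^{2,q}_{loc}$, which the paper records explicitly to justify the density argument in the Carleman inequality.
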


In fact, we will prove the following vectorial version of \cite[Theorem 6.3]{JK85} and derive Theorem \ref{thm:main} from it.

\begin{theorem}\label{uc:thm}
Let $\Omega$ be an open connected subset of $\mathbb R^N$, $N\geq 3$, $q:=\frac{2N}{N+2}$ and $W\in L^{N/2}_{loc}(\Omega)$. For each $i=1,\ldots,\ell$, let $u_i\in W^{2,q}_{loc}(\Omega)$ satisfy 
\begin{equation}\label{eq:dif_ineqs}
    |\Delta u_{i}|\leq |W|\,|\bf u|\quad \text{ a.e. in \ }\Omega,
\end{equation}
where $\bf u=(u_1,\ldots,u_\ell)$ and $|\bf u|=|u_1|+\cdots+|u_\ell|$.  If every component $u_i$ vanishes of infinite order at some $x_0\in \Omega$, then $\bf u\equiv0$ in $\Omega$.
\end{theorem}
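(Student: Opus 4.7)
The plan is to derive a vectorial Carleman inequality by summing the scalar Jerison–Kenig estimate applied componentwise and coupling the resulting inequalities through the differential constraints \eqref{eq:dif_ineqs} via a Hölder absorption on a small ball. We may assume $x_0=0$. The key scalar tool is the weighted estimate of \cite{JK85}: there exist a discrete set $D\subset(0,\infty)$ and a constant $C>0$ such that, for every $a\in(0,\infty)\setminus D$ and every $\phi\in C_c^\infty(\R^N\setminus\{0\})$,
\begin{equation*}
\||x|^{-a}\phi\|_{L^{2^*}(\R^N)}\leq C\,\||x|^{-a}\Delta\phi\|_{L^{q}(\R^N)}.
\end{equation*}
Because $\tfrac{1}{q}=\tfrac{2}{N}+\tfrac{1}{2^*}$, Hölder's inequality gives $\||x|^{-a}|W|\psi\|_{L^{q}}\leq \|W\|_{L^{N/2}}\||x|^{-a}\psi\|_{L^{2^*}}$ for any measurable $\psi$.

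Next I fix $R>0$ small enough that $\overline{B_{2R}}\subset\Omega$ and $C\ell\,\|W\|_{L^{N/2}(B_{2R})}\leq 1/2$, which is possible since $W\in L^{N/2}_{loc}(\Omega)$. Pick $\eta\in C_c^\infty(B_{2R})$ with $\eta\equiv 1$ on $B_R$ and set $\phi_i:=\eta u_i$. Although $\phi_i\notin C_c^\infty(\R^N\setminus\{0\})$, the infinite-order vanishing of $u_i$ at $0$ permits the standard approximation $\phi_i\leftarrow(1-\chi_{B_\varepsilon})\phi_i$ in the Carleman estimate, since the resulting error terms concentrated in $B_{2\varepsilon}\setminus B_\varepsilon$ are controlled by a fixed negative power of $\varepsilon$ times $\int_{B_{2\varepsilon}}(|u_i|^2+|\nabla u_i|^2)$, which tends to $0$ by \eqref{eq:vanish} (after using $W^{2,q}_{loc}$ regularity for the gradient term). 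Expanding $\Delta\phi_i=\eta\Delta u_i+2\nabla\eta\cdot\nabla u_i+u_i\Delta\eta$ and noting that the last two summands are supported in the annulus $A:=\overline{B_{2R}}\setminus B_R$, on which $|x|^{-a}\leq R^{-a}$, the scalar Carleman estimate applied to each $\phi_i$ yields
\begin{equation*}
\||x|^{-a}\phi_i\|_{L^{2^*}}\leq C\,\||x|^{-a}\eta|W|\,|\bf u|\|_{L^{q}}+C R^{-a}\bigl(\|\nabla u_i\|_{L^q(A)}+\|u_i\|_{L^q(A)}\bigr).
\end{equation*}

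Summing over $i=1,\dots,\ell$, using $\eta|\bf u|=\sum_i|\phi_i|$ and the triangle inequality in $L^{2^*}$, then invoking Hölder with the chosen smallness of $\|W\|_{L^{N/2}(B_{2R})}$, I can absorb the coupled term on the right into the left, obtaining
\begin{equation*}
\sum_{i=1}^\ell \||x|^{-a}\phi_i\|_{L^{2^*}}\leq 2C R^{-a}M,\qquad M:=\sum_{i=1}^\ell\bigl(\|\nabla u_i\|_{L^q(A)}+\|u_i\|_{L^q(A)}\bigr)<\infty.
\end{equation*}
Restricting the left-hand side to any ball $B_\rho$ with $\rho<R$ and using $|x|^{-a}\geq\rho^{-a}$ there gives $\|\bf u\|_{L^{2^*}(B_\rho)}\leq 2CM\,(\rho/R)^a$. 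Since $\rho/R<1$, letting $a\to\infty$ along $(0,\infty)\setminus D$ forces $\bf u\equiv 0$ on $B_\rho$.

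It remains to propagate the vanishing across $\Omega$. The set $Z:=\{x\in\Omega:\bf u\equiv 0\text{ on a neighborhood of }x\}$ is nonempty by the previous step and obviously open; to see it is closed in $\Omega$, I take $y\in\partial Z\cap\Omega$, note that $\bf u$ vanishes on an open subset arbitrarily close to $y$, hence vanishes of infinite order at $y$, and apply the local result above at $y$ (on a small ball on which the smallness of $\|W\|_{L^{N/2}}$ holds). Connectedness of $\Omega$ then gives $Z=\Omega$. The main obstacle in this scheme is the absorption step: the factor $\ell$ coming from summing over components must be defeated by the local smallness of $\|W\|_{L^{N/2}}$, so the Carleman inequality must be applied on a ball whose radius is dictated by $W$ rather than by the solution itself. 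This is exactly what permits the conclusion even when $\ell$ is large.
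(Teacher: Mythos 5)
Your overall strategy is exactly the paper's: apply the Jerison--Kenig Carleman estimate componentwise, sum, absorb the coupled term by H\"older against $\|W\|_{L^{N/2}}$ on a ball whose radius is dictated by $W$ and $\ell$ (not by the Carleman parameter), send the parameter to infinity to kill a neighborhood of $x_0$, and finish by connectedness. Your bookkeeping differs only cosmetically (you shrink the support of the cutoff so the absorption covers it entirely, whereas the paper fixes the cutoff at unit scale and splits off a far-field term $|h^{\tau}\Delta f_i|_{q;\,\R^N\smallsetminus B_\rho}$); both variants work. Two points, however, need attention, and the first is a genuine gap.

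The gap is your disposal of the gradient error term near the origin. You assert that the commutator terms on $B_{2\varepsilon}\smallsetminus B_\varepsilon$ are controlled by a negative power of $\varepsilon$ times $\int_{B_{2\varepsilon}}(|u_i|^2+|\nabla u_i|^2)$ and that this tends to $0$ ``by \eqref{eq:vanish} (after using $W^{2,q}_{loc}$ regularity for the gradient term)''. But \eqref{eq:vanish} is an assumption on $u_i$ only; infinite-order vanishing of $\int_{B_r}|u_i|^2$ does not transfer to $\int_{B_r}|\nabla u_i|^2$ by Sobolev regularity alone, and this is precisely where the equation must be used a second time. The paper closes this by the interpolation inequality
\begin{equation*}
|\nabla u_i|_{q,B_{2/R}}\leq CR^{N/q}\left(R^{N-1}|u_i|_{1,B_{4/R}}+R^{1+\frac{N}{r}}|\Delta u_i|_{r,B_{4/R}}\right),\qquad r=\tfrac{2N}{N+4},
\end{equation*}
and then bounds $|\Delta u_i|_{r,B_{4/R}}$ via \eqref{eq:dif_ineqs} and H\"older by $|W|_{N/2,B_{4/R}}\sum_j|u_j|_{2,B_{4/R}}$, to which \eqref{eq:vanish} applies. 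Note that the exponents matter: a naive Caccioppoli estimate would produce $\int|u_i|\,|\Delta u_i|\lesssim\|W\|_{N/2}\|u_i\|_{2^*}\|\mathbf u\|_{2^*}$ on small balls, and the $L^{2^*}$ norm is \emph{not} controlled by the $L^2$ hypothesis \eqref{eq:vanish}; the choice $r<2$ (and $L^1$ for the zero-order term) is what makes the reduction to \eqref{eq:vanish} legitimate. You need to supply this step, or an equivalent one.

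The second point is minor: the Carleman constant in \cite[Theorem 2.1]{JK85} is not uniform over all $a\in(0,\infty)\smallsetminus D$; it degenerates as $a$ approaches the exceptional set. To let $a\to\infty$ with a fixed constant you must take a sequence $a_k\to\infty$ with $\operatorname{dist}(a_k,D)\geq\delta>0$ fixed (e.g.\ $a_k=k+\tfrac12$), exactly as the paper does with its $\lambda_k$. As written, your ``single $C$ for all admissible $a$'' is too strong, though the fix is routine and does not affect the architecture of the proof.
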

To prove this result we adapt the argument in \cite[Theorem 6.3]{JK85}, using the Carleman-type estimate \cite[Theorem 2.1]{JK85} and some ideas from \cite{ACS23}.

The following result for cooperative systems can be derived from \cite{Rul18}.

\begin{theorem}\label{thm:cooperative}
Assume that $V_i\in L^\infty_{loc}(\o)$ and $\beta_{ij}\in W^{1,\infty}_{loc}(\o)$ for every $i,j=1,\ldots,\ell$. Let $\bf u=(u_1,\ldots,u_\ell)$ be a weak solution to the system \eqref{eq:sys_nonlin}. If for some $i$ we have that
\begin{align*}
c_i:=\sum_{j=1}^\ell\beta_{ij}|u_j|^p>0\quad \text{ in } \Omega
\end{align*}
and $u_i$ vanishes of infinite order at $x_0\in\o$, then $u_i\equiv 0$ in $\o$. 
\end{theorem}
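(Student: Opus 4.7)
The plan is to reduce the theorem to the scalar strong unique continuation result of \cite{Rul18} for semilinear equations with a sublinear reaction term of the favorable sign. Fix an index $i$ for which $c_i>0$ and $u_i$ vanishes of infinite order at $x_0$. The $i$-th equation of \eqref{eq:sys_nonlin} can be written as the single scalar equation
\[
-\Delta u_i + V_i(x)\,u_i - c_i(x)\,|u_i|^{p-2}u_i = 0 \quad \text{in } \Omega,
\]
where the remaining components $u_j$ enter only through the non-negative coefficient $c_i$. Since $p-1\in(0,1)$, the reaction is sublinear, and the sign condition $c_i>0$ places this equation in the same regime as the equation $-\Delta u - |u|^{r-2}u=0$ treated in \cite{SW18,Rul18}, for which SUCP is known.

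The next step is to verify that $V_i$ and $c_i$ satisfy the regularity hypotheses of \cite{Rul18}. The assumption $V_i\in L^\infty_{loc}(\Omega)$ is given, and a standard elliptic bootstrap applied to the system (using $V_j\in L^\infty_{loc}$, $\beta_{jk}\in L^\infty_{loc}$, and the subcritical growth of the couplings since $p\leq 2^*/2$) yields $u_j\in C^{1,\alpha}_{loc}(\Omega)$ for every $j$. Because $p>1$, the map $t\mapsto|t|^p$ is $C^1$ with derivative $p|t|^{p-1}\mathrm{sgn}(t)$, so $|u_j|^p\in W^{1,\infty}_{loc}(\Omega)$; combined with $\beta_{ij}\in W^{1,\infty}_{loc}(\Omega)$, this gives $c_i\in W^{1,\infty}_{loc}(\Omega)$, which is precisely the regularity needed to apply the SUCP theorem of \cite{Rul18}. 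Invoking that result for the displayed scalar equation produces $u_i\equiv 0$ in $\Omega$.

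The main obstacle, and the reason the theorem requires $c_i>0$, is the pointwise positivity of the coefficient of the sublinear term demanded by R\"uland's argument. When $c_i$ could vanish on large sets or change sign (as in fully competitive configurations with some negative $\beta_{ij}$), the sign structure of the reduced scalar equation is lost and \cite{Rul18} cannot be applied directly; this is precisely the gap that Theorem \ref{uc:thm}, and hence Theorem \ref{thm:main}, is designed to fill.
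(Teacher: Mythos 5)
Your proposal is correct and follows essentially the same route as the paper: rewrite the $i$-th equation as the scalar sublinear equation $-\Delta u_i+V_iu_i=c_i|u_i|^{p-2}u_i$, use elliptic regularity of the components to show $c_i\in W^{1,\infty}_{loc}(\Omega)$, and invoke R\"uland's strong unique continuation result (the paper cites \cite[Example 1.1 and Theorem 3(b)]{Rul18}). Your regularity step is in fact slightly more explicit than the paper's, since passing from $u_j$ to $\nabla(|u_j|^p)\in L^\infty_{loc}$ does require control of $\nabla u_j$, which your $C^{1,\alpha}_{loc}$ bootstrap supplies.
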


We apply Theorem \ref{thm:main} to obtain some nonexistence results for the critical system
\begin{equation} \label{eq:critical_system}
\begin{cases}
-\Delta u_i = \sum\limits_{i,j=1}^\ell\beta_{ij}|u_j|^\frac{2^*}{2}|u_i|^{\frac{2^*}{2}-2}u_i, \\
u_i\in D^{1,2}_0(\Omega),
\end{cases}
\end{equation}
where
\begin{align*}
\text{$\Omega$ is a smooth domain in $\rn$, $N\geq 3$, $\beta_{ij}\in\r$, $\beta_{ii}>0$, and $\beta_{ij}<0$ if $i\neq j$. }
\end{align*}
Our first application is the following result.

\begin{theorem} \label{thm:starshaped}
If $\o$ is strictly star-shaped and $\rn\smallsetminus\o$ has non-empty interior, then the system \eqref{eq:critical_system} does not have a nontrivial solution.
\end{theorem}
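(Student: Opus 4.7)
The plan is to combine a classical Pohozaev-type identity for the system with the strong unique continuation property provided by Theorem \ref{thm:main}. Assume, for contradiction, that $\bf u=(u_1,\ldots,u_\ell)$ is a nontrivial solution; after translating, I may assume $\Omega$ is strictly star-shaped with respect to the origin, so that $x\cdot\nu(x)>0$ on $\partial\Omega$ (with $\nu$ the outer unit normal). Standard Brezis--Kato bootstrapping, together with smoothness of $\partial\Omega$, upgrades $\bf u$ to $C^{1,\alpha}(\overline\Omega)$, making all integrations by parts below legitimate.

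The first step is a Pohozaev identity for the system: test the $i$-th equation with $x\cdot\nabla u_i$ and sum over $i$. Under the symmetry $\beta_{ij}=\beta_{ji}$ (implicit in the variational structure of the system), the nonlinear right-hand side assembles into the $2^*$-homogeneous potential $F(\bf u):=\frac{1}{2^*}\sum_{i,j}\beta_{ij}|u_i|^{2^*/2}|u_j|^{2^*/2}$. Testing each equation with $u_i$ and summing gives $\sum_i\int_\Omega|\nabla u_i|^2\dx=2^*\int_\Omega F(\bf u)\dx$, and the critical identity $N/2^*=(N-2)/2$ causes all bulk contributions to cancel, leaving only
\begin{equation*}
\int_{\partial\Omega}(x\cdot\nu)\sum_{i=1}^\ell|\nabla u_i|^2\,d\sigma = 0.
\end{equation*}
Strict star-shapedness then forces $\nabla u_i\equiv 0$ on $\partial\Omega$ for every $i$, which together with $u_i=0$ on $\partial\Omega$ gives full Cauchy data zero on the boundary.

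The second step is to extend by zero and invoke unique continuation. Because $u_i=0$ and $\nabla u_i=0$ on $\partial\Omega$ (and $u_i\in C^{1,\alpha}(\overline\Omega)$), the trivial extension $\tilde u_i$ of $u_i$ by $0$ across $\partial\Omega$ lies in $W^{2,q}_{loc}(\rn)$ with $q=\frac{2N}{N+2}$ and its distributional Laplacian carries no boundary measure; consequently $\tilde{\bf u}=(\tilde u_1,\ldots,\tilde u_\ell)$ weakly solves the critical system in all of $\rn$. The hypothesis that $\rn\setminus\Omega$ has non-empty interior supplies an open ball $B\subset\rn\setminus\overline\Omega$ on which every $\tilde u_i$ vanishes, so in particular each $\tilde u_i$ vanishes of infinite order at any point of $B$. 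Theorem \ref{thm:main}, applied on the connected set $\rn$ with $V_i\equiv 0$, locally bounded (in fact constant) coefficients $\beta_{ij}$, and exponent $p=2^*/2$, then yields $\tilde{\bf u}\equiv 0$ in $\rn$, so $\bf u\equiv 0$ in $\Omega$, contradicting nontriviality. I expect the most delicate point to be verifying that the trivial extension is a genuine weak solution across $\partial\Omega$, since this is exactly what the sharp Pohozaev conclusion $\nabla u_i|_{\partial\Omega}=0$ is designed to guarantee; the rest of the argument is essentially a bookkeeping of regularity and of the hypotheses needed by Theorem \ref{thm:main}.
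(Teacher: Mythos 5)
Your proposal is correct and follows essentially the same route as the paper: a Pohozaev identity (obtained by testing with $x\cdot\nabla u_i$, combined with the Nehari identity at the critical exponent) forces $\partial_\nu u_i=0$ on $\partial\Omega$, the trivial extension is then a solution on all of $\rn$ vanishing on an open subset of the complement, and Theorem \ref{thm:main} gives $\bf u\equiv 0$. Your explicit attention to the regularity needed for the integrations by parts and for the zero extension to be a genuine $W^{2,q}_{loc}$ weak solution is a point the paper passes over silently, but the argument is the same.
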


Our second application concerns nonexistence of minimal symmetric solutions to the system \eqref{eq:critical_system} with $\beta_{ii}=1$ and $\beta_{ij}=\beta<0$ if $i\neq j$. The setting is as follows. Let $G$ be a closed subgroup of the group $O(N)$ of linear isometries of $\rn$ and let $S_\ell$ be the group of permutations of the set $\{1,\ldots,\ell\}$, acting on $\r^\ell$ by permuting coordinates, i.e.,
$$\sigma(u_1,\ldots,u_\ell)=(u_{\sigma(1)},\ldots,u_{\sigma(\ell)})\text{ \ for every \ }\sigma\in S_\ell, \ (u_1,\ldots,u_\ell)\in\r^\ell.$$
Let $\phi:G\to S_\ell$ be a continuous homomorphism of groups and $\o$ be $G$-invariant (i.e., $gx\in\o$ for every $g\in G$ and $x\in\o$). A function $\bf u:\o\to\r^\ell$ is called $\phi$-equivariant if
\begin{equation*}
\bf u(gx)=\phi(g)\bf u(x) \text{ \ for all \ }g\in G, \ x\in\o.
\end{equation*}
Set
$$\cH:=\{\bf u\in(D^{1,2}_0(\o))^\ell:\bf u\text{ is }\phi\text{-equivariant}\}.$$
By the principle of symmetric criticality \cite[Theorem 1.28]{w} the $\phi$-equivariant solutions to the system \eqref{eq:critical_system} with $\beta_{ii}=1$ and $\beta_{ij}=\beta$ if $i\neq j$ are the critical points of the $\cC^1$-functional $\cJ:\cH\to\r$ given by 
$$\cJ(\bf u):= \frac{1}{2}\sum_{i=1}^\ell\io|\nabla u_i|^2 - \frac{1}{2^*}\sum_{i=1}^\ell\io |u_i|^{2^*}-\frac{\beta}{2^*}\sum_{\substack{i,j=1 \\ i\neq j}}^\ell\io |u_i|^\frac{2^*}{2}|u_j|^\frac{2^*}{2}.$$
The fully nontrivial ones (i.e., those whose components are nontrivial) belong to the set
\begin{align*}
\cN^\phi(\o):=\{\bf u\in\cH:u_i\neq 0, \ \partial_{i}\cJ(\bf u)u_i=0 \ \ \forall i=1,\ldots,\ell\}.
\end{align*}
Define
\begin{equation*}
c^\phi(\o):= \inf_{\bf u\in \cN^\phi(\o)}\cJ(\bf u).
\end{equation*}
If $\bf u\in\cN^\phi(\o)$ satisfies $\cJ(\bf u)=c^\phi(\o)$, then it is a solution to the system. It is called a $\phi$-equivariant least energy solution. Existence of solutions of this kind is studied in \cite{cfs}.

Let \ $\F(G):=\{x\in\rn:gx=x\text{ \ for every \ }g\in G\}$. 

\begin{theorem}\label{thm:no_minimum}
If $\o\cap\F(G)\neq\emptyset$ then $c^\phi(\o)=c^\phi(\rn)$. If, in addition, $\rn\smallsetminus\o$ has non-empty interior then the system \eqref{eq:critical_system} with $\beta_{ii}=1$ and $\beta_{ij}=\beta<0$ if $i\neq j$ does not have a $\phi$-equivariant least energy solution.
\end{theorem}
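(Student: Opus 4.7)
The plan is to establish the equality of the two minimax levels and then deduce the nonexistence claim by contradiction, using the strong unique continuation result Theorem~\ref{thm:main}.

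For the inequality $c^\phi(\rn)\leq c^\phi(\o)$, given any $\bf u\in\cN^\phi(\o)$, its extension by zero $\widetilde{\bf u}$ to $\rn$ lies in $(D^{1,2}_0(\rn))^\ell$, is still $\phi$-equivariant (the zero extension commutes with the permutation action on components), has all components nontrivial, and preserves both the integrals defining $\cJ$ and the identities $\partial_i\cJ(\widetilde{\bf u})\widetilde u_i=\partial_i\cJ(\bf u)u_i=0$. Hence $\widetilde{\bf u}\in\cN^\phi(\rn)$ with the same energy, giving the inequality.

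For the reverse inequality $c^\phi(\o)\leq c^\phi(\rn)$, fix $x_0\in\o\cap\F(G)$ and $r>0$ with $B_{2r}(x_0)\subset\o$; the ball is $G$-invariant because $x_0$ is $G$-fixed. Starting from a minimizing sequence $\bf w_n\in\cN^\phi(\rn)$, consider the conformal dilations
\begin{equation*}
\bf w_{n,\lambda}(x):=\lambda^{(N-2)/2}\,\bf w_n\bigl(\lambda(x-x_0)\bigr),\qquad \lambda>0,
\end{equation*}
which leave $\cJ$ and the Nehari identities invariant and preserve $\phi$-equivariance (thanks to $x_0\in\F(G)$, the centered dilation commutes with the $G$-action). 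Multiply by a radial, hence $G$-invariant, cutoff $\eta\in C_c^\infty(B_{2r}(x_0))$ with $\eta\equiv 1$ on $B_r(x_0)$. As $\lambda\to\infty$ the mass of $\bf w_{n,\lambda}$ concentrates at $x_0$, so $\eta\bf w_{n,\lambda}-\bf w_{n,\lambda}\to 0$ in $(D^{1,2}_0(\rn))^\ell$. A standard $\phi$-equivariant Nehari projection then yields scalars $t_{i,n,\lambda}>0$, constant on the orbits of $\phi(G)$ in $\{1,\dots,\ell\}$ and tending to $1$, so that $(t_{i,n,\lambda}\eta w_{n,\lambda,i})_i\in\cN^\phi(\o)$. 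Passing to the limits $\lambda\to\infty$ and $n\to\infty$ gives $c^\phi(\o)\leq c^\phi(\rn)$.

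For the nonexistence claim, suppose for contradiction that $\bf u\in\cN^\phi(\o)$ achieves $\cJ(\bf u)=c^\phi(\o)=c^\phi(\rn)$. Its zero extension $\widetilde{\bf u}$ then minimizes $\cJ$ over $\cN^\phi(\rn)$; by the standard Nehari argument the Lagrange multipliers vanish at such a minimum, so $\widetilde{\bf u}$ is a critical point of $\cJ$ on the $\phi$-equivariant subspace of $(D^{1,2}_0(\rn))^\ell$, and by the principle of symmetric criticality $\widetilde{\bf u}$ is a weak solution of the critical system on all of $\rn$. Since $\rn\smallsetminus\o$ contains an open ball $B$ and $\widetilde{\bf u}\equiv 0$ on $B$, every component vanishes of infinite order at every point of $B$; Theorem~\ref{thm:main} then forces $\widetilde{\bf u}\equiv 0$ in $\rn$, contradicting the full nontriviality of $\bf u$. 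The most delicate step is the upper-bound construction: the Nehari projection must be carried out so as to respect the permutation symmetry $\phi$ (the rescaling parameters must be constant on $\phi(G)$-orbits) and the dilations must be centered at a $G$-fixed point. Once this is arranged, the symmetric-criticality step that promotes the Nehari minimizer to a weak solution on the whole space is what makes Theorem~\ref{thm:main} applicable.
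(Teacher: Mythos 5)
Your argument is correct in outline, and your treatment of the inequality $c^\phi(\rn)\leq c^\phi(\o)$ and of the nonexistence claim coincides with the paper's (trivial extension into $\cN^\phi(\rn)$, then the minimizer is a solution on all of $\rn$ vanishing on an open set, contradicting Theorem \ref{thm:main}). Where you genuinely diverge is the upper bound $c^\phi(\o)\leq c^\phi(\rn)$. The paper first replaces the minimizing sequence by one whose components lie in $\cC^\infty_c(\rn)$ (a density argument) and then applies the dilation $x\mapsto \eps_k^{(2-N)/2}\vp_{i,k}((x-\xi)/\eps_k)$ centered at the fixed point $\xi\in\o\cap\F(G)$, with $\eps_k$ so small that the rescaled supports land inside a ball contained in $\o$. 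Since the dilation exactly preserves $\|\cdot\|^2$, the $L^{2^*}$ norms and the coupling integrals, the rescaled functions are already in $\cN^\phi(\o)$ with the same energy: no cutoff and no reprojection onto the Nehari-type set are needed. Your route (cutoff $\eta$ plus a Nehari projection of $\eta\,\bf w_{n,\lambda}$) reaches the same conclusion but introduces a step that is not quite ``standard'' for competitive couplings: for $\beta<0$ the set $\cN^\phi$ need not admit a global projection, so you must justify the local one, e.g.\ by the implicit function theorem applied to the system $t_i^2\|v_i\|^2=t_i^{2^*}\!\int|v_i|^{2^*}+\beta\sum_{j\neq i}t_i^{p}t_j^{p}\!\int|v_i|^p|v_j|^p$ near $t=(1,\dots,1)$, whose Jacobian is strictly diagonally dominant precisely because $2p-2=2^*-2$ and $\int|u_i|^{2^*}>|\beta|\sum_{j\neq i}\int|u_i|^p|u_j|^p$ on $\cN^\phi$; uniqueness of the resulting $t$ then gives its constancy on $\phi(G)$-orbits. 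So your approach works but carries an extra verification that the paper's compact-support-plus-dilation trick avoids entirely; on the other hand, your version does not rely on being able to approximate the minimizing sequence within $\cN^\phi(\rn)$ by compactly supported functions, which the paper asserts without detail.
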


There is a quite extensive list of works addressing unique continuation properties for second-order linear elliptic operators of the form 
\begin{equation}\label{eq:ellip_lin}
    \sum_{i,j}\partial_j(a^{ij}\partial_i u) = Vu + W_1 \nabla u + \nabla (W_2 u),
\end{equation}
which depend on suitable assumptions on the regularity of the coefficients $a^{ij}$ and the potentials $V$, $W_1$, and $W_2$. We refer to \cite{Car39,Aro57,AKS62,JK85,Sog90,Wol95,Reg99,KT01} and the references therein.

The case of systems with sublinear factors in the coupling terms is much less studied and very few results are available. In \cite{MNS21} the authors show the validity of unique continuation principles for a family of elliptic systems that are strongly coupled; this means that if one component vanishes, then all the other components must also vanish.  This covers some important cases, such as elliptic Hamiltonian systems (see also \cite{ST18}, where it is shown that the zero set of least-energy solutions has zero measure in the case of Lane-Emden systems with Neumann boundary conditions in the sublinear regime); however, note that system \eqref{eq:sys_nonlin} admits solutions with trivial components; therefore it is not strongly coupled and the techniques from \cite{MNS21} cannot be applied in this case. 

Garofalo and Lin introduced a different approach to prove unique continuation results in \cite{gl, gl2}. Their main tool is Almgren's monotonicity formula. This approach is followed in \cite{SW18} to treat sublinear equations. Hugo Tavares has shown us (personal communication) that it can also be used to derive the (WUCP) for systems.

We close this introduction with some open problems. In Theorem \ref{thm:main} we show that if \emph{all} components $u_i$ vanish of infinite order at the same point $x_0\in \Omega$, then $\textbf{u}$ must vanish everywhere.  Only in this case we are able to use the underlying superlinear structure of system \eqref{eq:sys_nonlin}. However, if some component does not vanish at $x_0$ and $p\in(1,2)$, then the sublinear structure of the system remains, and the failure of unique continuation properties might be possible in this instance.  We therefore state the following open problem.

\begin{question}\label{Q1}
If $\bf u$ is a solution of \eqref{eq:sys_nonlin} and  if some component $u_i$ vanishes of infinite order at some $x_0\in \Omega$, is it true that $u_i$ vanishes everywhere? 
\end{question}

Theorem \ref{thm:cooperative} gives a partial answer \emph{for cooperative systems}, but it requires some knowledge on the set of zeros of the components of the solution. It would be interesting to obtain results with hypotheses concerning only the data (and not the solution) and to determine whether unique continuation properties hold or not in general for competitive systems (i.e., systems with $\beta_{ii}>0$ and $\beta_{ij}<0$ for $i\neq j$).

Question \ref{Q1} can be rephrased in terms of a scalar equation. Consider the problem
\begin{align}\label{u}
-\Delta u+Vu = W_1|u|^{r-2}u - W_2|u|^{s-2}u,
\end{align}
where $r>2$, $s\in(1,2),$ and $V,W_1,W_2$ are potentials satisfying suitable assumptions (for instance, if they are positive constants). If $W_1$ and $W_2$ are nonnegative, the sublinear part of the nonlinearity has the same sign as in the counterexample  \eqref{ce}, but there is also an additional superlinear term. Does the superlinear term help regain the unique continuation property? More precisely,

\begin{question}\label{Q2}
If $u$ is a solution of \eqref{u} and if $u$ vanishes of infinite order at some $x_0\in \Omega$, is it true that $u$ vanishes everywhere? 
\end{question}

After a preprint of this work appeared on arXiv, Nicola Soave contacted us with a negative answer to Question \ref{Q2}, at least whenever the potentials are positive constants. With his permission, we include this argument in Remark \ref{remark:nicola}.

The paper is organized as follows. In Section \ref{sec:uc} we prove Theorems \ref{thm:main}, \ref{uc:thm} and \ref{thm:cooperative}. Section  \ref{sec:applications} is devoted to the applications (Theorems \ref{thm:starshaped} and \ref{thm:no_minimum}).

\section{Unique continuation}
\label{sec:uc}

In this section we prove Theorem \ref{uc:thm} and derive Theorem \ref{thm:main} from it. We start with some notation.

For a vector $\bf u=(u_1,\ldots,u_\ell)\in \mathbb R^\ell$ we set $|\bf u|:=|u_1|+\cdots+|u_\ell|$.  Let $N\geq 3$ and fix
\begin{align*}
q:=\frac{2N}{N+2}\in(1,2).
\end{align*}
Note that $q$ is the conjugate exponent of the critical Sobolev exponent $2^*=\frac{2N}{N-2}.$ 

For $v\in L^r(\Omega)$, set
\begin{equation*}
|v|_{r;\Omega}:=\left(\int_{\Omega}|v_i|^r\right)^{1/r}
\end{equation*}
and $|v|_{r}:=|v|_{r;\R^N}$.

For $\bf v=(v_1,\ldots,v_l)\in L^r(\mathbb R^N)^\ell$, we define
\begin{equation*}
|\bf v|_{r}:=\left(\sum_{i=1}^{\ell}|v_i|_r^r\right)^{1/r},
\end{equation*}

Finally, we use $B_s$ to denote the ball of radius $s>0$ in $\rn$ centered at zero.

To prove Theorem \ref{uc:thm} we adapt the proof of \cite[Theorem 6.3]{JK85} to the vector case. A key ingredient is the following Carleman-type estimate.

\begin{theorem}\label{th:carleman_JK}
    Let $\lambda\in (0,\infty)\smallsetminus \mathbb N$ and $\delta:=\min_{z\in\mathbb Z}|\lambda-z|$. There is $C_{\delta,N}>0$, depending only on $\delta$ and $N$, such that
    \begin{align}
      \left(\int_{\R^N} |x|^{-2^*\lambda-N} |u(x)|^{2^*} \d x\right)^\frac{1}{2^*} 
      \leq C_{\delta,N} \left(\int_{\R^N}|x|^{(2-\lambda)q-N} |\Delta u(x)|^q \d x \right)^\frac{1}{q}
    \end{align}
for all $u\in\cC^\infty_c(\mathbb R^n\smallsetminus\{0\})$.
\end{theorem}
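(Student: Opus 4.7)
The plan is to reduce the weighted inequality on $\R^N$ to a translation-invariant inequality on the cylinder $\R\times S^{N-1}$ via an Emden--Fowler (logarithmic) change of variables, and then to apply a uniform Sobolev-type bound mode by mode in the spherical harmonic decomposition.

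First I would set $x=e^{-t}\omega$, $t\in\R$, $\omega\in S^{N-1}$, and $v(t,\omega):=u(e^{-t}\omega)$. The polar identity $r^{2}\Delta=(r\partial_r)^{2}+(N-2)(r\partial_r)+\Delta_\omega$ gives
\begin{equation*}
\Delta u(x)=e^{2t}\bigl[\partial_t^{2}v-(N-2)\partial_t v+\Delta_\omega v\bigr].
\end{equation*}
When the Jacobian $|x|^{N-1}$ is combined with the weights $|x|^{-2^{*}\lambda-N}$ and $|x|^{(2-\lambda)q-N}$, a direct computation shows that, after substituting $w:=e^{\lambda t}v$, the two sides of the desired Carleman estimate are equivalent to $\|w\|_{L^{2^{*}}(\R\times S^{N-1})}$ and $\|L_\lambda w\|_{L^{q}(\R\times S^{N-1})}$, respectively, where
\begin{equation*}
L_\lambda:=\partial_t^{2}-(2\lambda+N-2)\partial_t+\lambda(\lambda+N-2)+\Delta_\omega.
\end{equation*}
The problem thus reduces to proving $\|w\|_{L^{2^{*}}}\le C_{\delta,N}\|L_\lambda w\|_{L^{q}}$ on the cylinder for all $w\in\mathcal{C}^{\infty}_{c}(\R\times S^{N-1})$.

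Next I would expand $w(t,\omega)=\sum_{k\ge 0}w_k(t)Y_k(\omega)$ in spherical harmonics, where $-\Delta_\omega Y_k=k(k+N-2)Y_k$. On the $k$-th mode the operator factors as
\begin{equation*}
L_{\lambda,k}=\bigl(\partial_t-(\lambda-k)\bigr)\bigl(\partial_t-(\lambda+k+N-2)\bigr),
\end{equation*}
so its Fourier symbol in $t$ is $(i\tau-(\lambda-k))(i\tau-(\lambda+k+N-2))$. The hypothesis $\lambda\in(0,\infty)\setminus\mathbb N$ together with $\delta=\min_{z\in\mathbb Z}|\lambda-z|$ yields the crucial separation $|\lambda-k|\ge\delta$ and $|\lambda+k+N-2|\ge\delta$ for every integer $k\ge 0$, which prevents the symbol from having real zeros and bounds it quantitatively away from $0$.

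The last step is to bound $\|w_k\|_{L^{2^{*}}}\le C_{\delta,N}\|L_{\lambda,k}w_k\|_{L^{q}}$ uniformly in $k$. For this I would invoke a uniform Sobolev inequality of Kenig--Ruiz--Sogge type for constant-coefficient second-order operators, which furnishes $L^{q}\to L^{2^{*}}$ resolvent estimates depending only on the distance from the symbol's zeros to the origin---the role played here by $\delta$. Transferring this bound to the cylinder via $L^{2}$-orthogonality of the spherical harmonics and a Littlewood--Paley/Stein summation across modes reconstructs $w$ and yields the Carleman estimate with constant depending only on $\delta$ and $N$. The main obstacle is exactly this uniform-in-$k$ resolvent bound: one must analyse the Fourier multiplier $[(i\tau-(\lambda-k))(i\tau-(\lambda+k+N-2))]^{-1}$ by oscillatory-integral/complex-interpolation methods in such a way that the constant is controlled solely by $\delta$. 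The quantitative separation $\delta$ is what keeps the constant finite; any loss of it (i.e.\ $\lambda$ approaching an integer) causes $C_{\delta,N}\to\infty$, which is consistent with the statement.
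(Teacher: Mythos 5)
The paper does not actually prove this theorem: its ``proof'' is a one-line citation of \cite[Theorem 2.1]{JK85}. Your attempt is therefore a reconstruction of Jerison and Kenig's own argument, and the reduction you propose is indeed the one they use: the Emden--Fowler substitution, the conjugated operator $L_\lambda$, the factorization $L_{\lambda,k}=\bigl(\partial_t-(\lambda-k)\bigr)\bigl(\partial_t-(\lambda+k+N-2)\bigr)$ on the $k$-th spherical harmonic, and the identification of $\delta=\min_{z\in\mathbb Z}|\lambda-z|$ as the quantity controlling the constant are all correct, as is the exponent bookkeeping that makes the two weighted norms collapse to unweighted norms on the cylinder.

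The genuine gap is in your final step, where you propose to ``reconstruct $w$ via $L^{2}$-orthogonality of the spherical harmonics and a Littlewood--Paley/Stein summation.'' Neither the $L^{q}$ norm on the right nor the $L^{2^*}$ norm on the left is computed by orthogonality, so mode-by-mode bounds $\|w_k\|_{L^{2^*}(\R)}\le C\|L_{\lambda,k}w_k\|_{L^{q}(\R)}$ do not recombine for free; this recombination is the heart of the matter and occupies most of \cite{JK85} together with Stein's appendix there. What is actually needed is (i) sharp $L^{q}(S^{N-1})\to L^{2^*}(S^{N-1})$ operator bounds for the spherical harmonic projections, which grow polynomially in $k$ and are themselves proved by oscillatory-integral methods, and (ii) a summation over $k$ in which the decay of the one-dimensional multipliers $\bigl[(i\tau-(\lambda-k))(i\tau-(\lambda+k+N-2))\bigr]^{-1}$ must beat that polynomial growth --- a borderline computation that cannot be handled by naively summing operator norms. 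Invoking a Kenig--Ruiz--Sogge uniform Sobolev estimate mode by mode does not fill this: that theorem concerns constant-coefficient operators on $\R^{n}$ at the Sobolev-dual exponent pair, whereas each mode here is a one-dimensional multiplier at the pair $(q,2^*)$ with $1/q-1/2^*=2/N$, and in any case it says nothing about how the modes are to be summed in $L^{q}\to L^{2^*}$. Since the statement is exactly a special case of \cite[Theorem 2.1]{JK85}, the efficient course is to cite it, as the paper does; a self-contained proof would have to supply (i) and (ii) in full.
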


\begin{proof}
This is a special case of \cite[Theorem 2.1]{JK85}.
\end{proof}

We are ready to prove Theorem \ref{uc:thm}.

\begin{proof}[Proof of Theorem \ref{uc:thm}] 
Let
\begin{align*}
\lambda\in(0,\infty)\smallsetminus \mathbb N\quad \text{ and }\quad \delta:=\min_{z\in\mathbb N}|\lambda-z|. 
\end{align*}
Hereafter $C>0$ denotes possibly different constants depending at most on $\delta$, $N$ and $\ell$. We divide the proof into three steps.

\smallskip
\textbf{Step 1: A scalar estimate.} Without loss of generality, assume that $x_0=0$ and $B_1\subset \Omega$. Let 
\begin{align*}
h(x):=\frac{1}{|x|}\qquad \text{ for \ }x\in\rn\smallsetminus\{0\}.
\end{align*}
Set $\tau:=\lambda+\frac{N}{2^*}$. By Theorem \ref{th:carleman_JK}, for any $f\in\cC_c^\infty(B_1\smallsetminus \{0\})$, 
    \begin{align} \label{eq:car_init}
        |h^{\tau} f|_{2^*}&=\left(\int_{\R^N} |x|^{-2^*\lambda-N} |f(x)|^{2^*} \d x\right)^\frac{1}{2^*}  \leq C \left(\int_{\R^N}|x|^{(2-\lambda)q-N} |\Delta f(x)|^q \d x \right)^\frac{1}{q}= C|h^{\tau}\Delta f|_{q},
    \end{align}
where we used that $2q-N+\frac{Nq}{2^*}=0$. Let 
\begin{align*}
\vp\in\cC_c^\infty(B_1) \quad \text{be such that }\quad 0\leq \vp\leq 1,\quad \vp\equiv 1 \textnormal{ in } B_{1/2}.     
\end{align*}
 For $R>0$, let $\psi_R\in\cC^\infty(\R^N)$ be such that
\begin{equation*}
0\leq \psi_R\leq 1, \quad \psi_R=1 \text{ in } \R^N\backslash B_{2/R}, \quad \psi_R=0 \text{ in }B_{1/R},
\end{equation*}
and
\begin{equation*}
|\nabla \psi_R|\leq C R, \quad |\Delta \psi_R|\leq CR^2\qquad \text{ in }\R^N.
\end{equation*}
For $i=1,\ldots,\ell,$ let 
\begin{align*}
f_i:=\vp u_i\qquad \text{ and }\qquad f_{i,R}:=f_i\psi_R.
\end{align*}
By a standard density argument, we can apply \eqref{eq:car_init} to $f_{i,R}$, and obtain
\begin{align}\label{eq:cari}
|h^{\tau}f_{i,R}|_{2^*} \leq C |h^{\tau}\Delta f_{i,R}\, |_{q}, \qquad i=1,\ldots,\ell.
\end{align}
Let $\rho\in(0,1/2)$ be a small parameter to be fixed later. Since $\vp\equiv 1$ in $B_{1/2}$,
\begin{equation}\label{eq:lb}
|h^{\tau}\psi_R u_i|_{2^*;B_\rho} \leq |h^{\tau}f_{i,R}|_{2^*}, \qquad i=1,\ldots,\ell.
\end{equation}
Moreover, for $i=1,\ldots,\ell$ and $R>2/\rho$,
\begin{align}
|h^{\tau}\Delta f_{i,R}|_{q} &= |h^{\tau}\Delta f_{i,R}|_{q;B_\rho} + |h^{\tau}\Delta f_{i,R}|_{q;\,\R^N\smallsetminus B_\rho}= |h^{\tau}\Delta(\psi_R u_i)|_{q; B_\rho} + |h^{\tau}\Delta f_{i}|_{q;\, \R^N\smallsetminus B_\rho}. \label{eq:ub}
\end{align}
From \eqref{eq:cari}, \eqref{eq:lb} and \eqref{eq:ub} we obtain
\begin{align*}
|h^{\tau}\psi_R u_i|_{2^*; B_\rho}\leq C \left(
|h^{\tau}\Delta(\psi_R u_i)|_{q; B_\rho} + |h^{\tau}\Delta f_{i}|_{q;\, \R^N\smallsetminus B_\rho} \right).
\end{align*}
Since $\Delta(\psi_Ru_i)=\psi_R \Delta u_i+2\nabla\psi_R\nabla u_i+u_i\Delta \psi_R $, using the properties of $\psi_R$, for each $i=1,\ldots,\ell$ we get
\begin{align}
|h^{\tau}\psi_R u_i|_{2^*; B_\rho}& \leq  C \Big(
|h^{\tau}\psi_R \Delta u_i|_{q; B_\rho} + R|h^{\tau} \nabla u_i |_{q; B_{2/R}\smallsetminus B_{1/R}}+ R^{2} |h^{\tau} u_i |_{q; B_{2/R}\smallsetminus B_{1/R}}+ |h^{\tau}\Delta f_{i}|_{q;\,\R^N\smallsetminus B_\rho} \Big). \label{eq:car_2}
\end{align}
\smallskip

\textbf{Step 2: A vectorial estimate.} Recall that, for any $a_1,\ldots,a_m\geq 0$, $m\in\n$,
\begin{equation}\label{basic}
\begin{cases}
|a_1^s+\ldots+a_m^s|^{1/s}\leq (a_1^t+\ldots+a_m^t)^{1/t} &\text{if \ }1\leq t\leq s<\infty, \smallskip \\
|a_1^s+\ldots+a_m^s|^{1/s}\leq m^\frac{t-s}{st}(a_1^t+\ldots+a_m^t)^{1/t}&\text{if \ }1\leq s\leq t<\infty.
\end{cases}
\end{equation}
For each $i=1,\ldots,\ell$, using \eqref{basic} and assumption \eqref{eq:dif_ineqs} we get
\begin{equation*}
|h^{\tau}\psi_R\Delta u_i|^q_{q;B_\rho} \leq {\ell}^{q-1}\Big(\sum_{i=1}^{\ell} |h^{\tau}\psi_R W u_i|^q_{q;B_\rho}\Big).
\end{equation*}
Then, as $1/2^*=1/q-2/N$, Hölder's inequality yields
\begin{align*}
    |h^{\tau}\psi_R\Delta u_i|^q_{q; B_\rho} &\leq {\ell}^{q-1}|W|_{N/2;B_\rho}^q\Big(\sum_{i=1}^{\ell}  |h^{\tau}\psi_R u_i|^q_{2^*;B_\rho}\Big),
\end{align*}
and using \eqref{basic} again we get
\begin{align}\label{eq:est_lap_ui}
    |h^{\tau}\psi_R\Delta u_i|_{q;B_\rho} & \leq \ell^\frac{q-1}{q}|W|_{N/2;B_\rho}\ell^\frac{2^*-q}{2^*q}\Big(\sum_{i=1}^{\ell}  |h^{\tau}\psi_R u_i|^{2^*}_{2^*;B_\rho}\Big)^\frac{1}{2^*} = \ell^\frac{N+2}{2N}|W|_{N/2;B_\rho} |h^{\tau}\psi_R {\bf u}|_{2^*; B_\rho}.
\end{align}
From \eqref{eq:car_2} and \eqref{eq:est_lap_ui} we get
\begin{align*}
|h^{\tau}\psi_R u_i|_{2^*; B_\rho}^{2^*} & \leq  C \Big(\ell^{2^*-1}|W|_{N/2;B_\rho}^{2^*} |h^{\tau}\psi_R {\bf u}|_{2^*; B_\rho}^{2^*} + R^{2^*}|h^{\tau} \nabla u_i |_{q; B_{2/R}\smallsetminus B_{1/R}}^{2^*} \\
&\qquad + (R^{2})^{2^*} |h^{\tau} u_i |_{q; B_{2/R}\smallsetminus B_{1/R}}^{2^*}+ |h^{\tau}\Delta f_{i}|_{q;\,\R^N\smallsetminus B_\rho}^{2^*} \Big). \nonumber
\end{align*}
Adding these inequalities, raising the inequality obtained to the power $\frac{1}{2^*}$ and using \eqref{basic}, we obtain 
\begin{align}\notag
|h^{\tau}\psi_R {\bf u}|_{2^*;B_\rho}&=\Big(\sum_{i=1}^\ell |h^{\tau}\psi_R u_i|_{2^*; B_\rho}^{2^*}\Big)^{1/2^*} \\ \notag
&\leq C \Big(\ell^{2^*}|W|_{N/2;B_\rho}^{2^*} |h^{\tau}\psi_R {\bf u}|_{2^*; B_\rho}^{2^*} + R^{2^*}\sum_{i=1}^\ell|h^{\tau} \nabla u_i |_{q; B_{2/R}\smallsetminus B_{1/R}}^{2^*}\\ \notag
&\qquad + (R^{2})^{2^*} \sum_{i=1}^\ell|h^{\tau} u_i |_{q; B_{2/R}\smallsetminus B_{1/R}}^{2^*}+ \sum_{i=1}^\ell|h^{\tau}\Delta f_{i}|_{q;\,\R^N\smallsetminus B_\rho}^{2^*} \Big)^{1/2^*} \\ \notag
&\leq C\Big(\ell|W|_{N/2;B_\rho}|h^{\tau}\psi_R {\bf u}|_{2^*; B_\rho} + R\sum_{i=1}^\ell|h^{\tau} \nabla u_i |_{q; B_{2/R}\smallsetminus B_{1/R}} \\ \label{eq:car_inter_C}
&\qquad + R^{2}\sum_{i=1}^\ell|h^{\tau} u_i |_{q; B_{2/R}\smallsetminus B_{1/R}}+ \sum_{i=1}^\ell|h^{\tau}\Delta f_{i}|_{q;\,\R^N\smallsetminus B_\rho} \Big).
\end{align}
We fix $\rho\in (0,1/2)$ small enough such that
\begin{equation*}
C\ell|W|_{N/2;B_\rho}<1/2,
\end{equation*}
where $C$ is the constant appearing in \eqref{eq:car_inter_C}. Note that $\rho$ depends only on $\delta$, $N$, $\ell$ and $W$, but not on $\lambda$. Then,
\begin{align} \label{eq:car_5}
|h^{\tau}\psi_R {\bf u}|_{2^*;B_\rho} \leq C\Big(R\sum_{i=1}^\ell|h^{\tau} \nabla u_i |_{q; B_{2/R}\smallsetminus B_{1/R}} + R^{2}\sum_{i=1}^\ell|h^{\tau} u_i |_{q; B_{2/R}\smallsetminus B_{1/R}}+ \sum_{i=1}^\ell|h^{\tau}\Delta f_{i}|_{q;\,\R^N\smallsetminus B_\rho} \Big).
\end{align}
\smallskip

\textbf{Step 3: Conclusion.} As $h(x)=\frac{1}{|x|}$, inequality \eqref{eq:car_5} yields 
\begin{align} \label{eq:car_6}
|h^{\tau}\psi_R {\bf u}|_{2^*;B_\rho} &\leq  C\Big(R^{1+\tau}\sum_{i=1}^{\ell}|\nabla u_i|_{q;B_{2/R}}  + R^{2+\tau} \sum_{i=1}^{\ell} |u_i|_{q;B_{2/R}} + \rho^{-\tau}\sum_{i=1}^{\ell}
|\Delta f_{i}|_{q;\, \R^N\smallsetminus B_\rho} \Big).
\end{align}
Since $q<2$ we have that $|u_i|_{q;B_{2/R}}\leq |u_i|_{2;B_{2/R}}$ for $R$ sufficiently large, and from
\eqref{eq:vanish} we obtain
\begin{equation}\label{eq:R2mt}
R^{2+\tau}|u_i|_{q;B_{2/R}}\to 0 \quad\text{as \ }R\to+\infty \quad \text{ for every \ }i=1,\ldots,\ell.
\end{equation}
As in \cite[pp. 480]{JK85}, using interpolation we see that
\begin{equation}\label{eq:nabla}
    |\nabla u_i|_{q,B_{2/R}}\leq CR^{N/q}\left( R^{N-1}|u_i|_{1,B_{4/R}}+R^{1+\frac{N}{r}}|\Delta u_i|_{r,B_{4/R}}\right)
\end{equation}
for some $C>0$ independent of $R$, where $q=\frac{2N}{N+2}$ and $r=\frac{2N}{N+4}$. Assumption \eqref{eq:dif_ineqs} and Hölder's inequality yield
\begin{align*}
    \int_{B_{4/R}}|\Delta u_i|^r &\leq \int_{B_{4/R}}|W|^r\Big(\sum_{j=1}^{\ell}|u_j|\Big)^r \leq C\sum_{j=1}^{\ell}\int_{B_{4/R}}|W|^r|u_j|^r \\
&\leq C\Big(\int_{B_{4/R}}|W|^{N/2}\Big)^{\frac{4}{N+4}} \sum_{j=1}^{\ell} \Big(\int_{B_{4/R}}|u_j|^2\Big)^{\frac{N}{N+4}}.
\end{align*}
Therefore,
\begin{align*}\label{eq:lapr_ui}
    |\Delta u_i|_{r,B_{4/R}} \leq C|W|_{N/2,B_{4/R}} \sum_{j=1}^{\ell} |u_j|_{2,B_{4/R}}
\end{align*}
for a constant $C>0$ independent of $R$. Combining this inequality with \eqref{eq:nabla} and using \eqref{eq:vanish} again we obtain 
\begin{equation}\label{eq:R1mt}
R^{1+\tau}|\nabla u_i|_{q;B_{2/R}}\to 0 \quad\text{as}\quad R\to+\infty\quad \text{ for every \ }i=1,\ldots,\ell.
\end{equation}
In view of \eqref{eq:R2mt} and \eqref{eq:R1mt}, we can pass to the limit on both sides of \eqref{eq:car_6} (using the monotone convergence theorem for the left-hand side), thus obtaining
\begin{align}\label{eq:car_final} 
\rho^\tau|h^{\tau}{\bf u}|_{2^*;B_\rho} &\leq  C\Big(\sum_{i=1}^{\ell} |\Delta f_{i}|_{q;\, \R^N\backslash B_\rho} \Big)
\end{align}
for any parameter $\tau$ of the form $\tau=\lambda+\frac{N}{2^*}$ with $\lambda\in (0,\infty)\smallsetminus\mathbb N$ and a constant $C>0$ only depending on $N$, $\ell$ and $\delta=\min_{z\in\mathbb Z}|\lambda-z|$. Take a sequence $(\lambda_k)$ in $(0,\infty)\smallsetminus \mathbb N$ such that $\min_{z\in\mathbb Z}|\lambda_k-z|=\delta$ and $\lambda_k\to \infty$ as $k\to \infty$, and set $\tau_k:=\lambda_k +\frac{N}{2^*}$. Note that
\begin{align*}
\rho^{\tau_k}|h^{\tau_k}{\bf u}|_{2^*;B_\rho}
\geq \rho^{\tau_k}|h^{\tau_k}{\bf u}|_{2^*;B_{\rho/2}}
\geq \rho^{\tau_k} (2/\rho)^{\tau_k} |{\bf u}|_{2^*;B_{\rho/2}}
=2^{\tau_k} |{\bf u}|_{2^*;B_{\rho/2}},
\end{align*}
where we used the definition of $h$ and that the domain of integration is $B_{\rho/2}$. Since \eqref{eq:car_final} holds true for all of these values $\tau_k$ with the same $\rho$ and the same $C>0$, we deduce that $\bf u\equiv 0$ in $B_{\rho/2}$. Finally, a standard connectedness argument (see, e.g., \cite[Theorem 5.2]{LRLR22}) yields that $\bf u\equiv 0$ in $\o$.
\end{proof}

We are now ready to prove our main result. 

\begin{proof}[Proof of Theorem \ref{thm:main}]
If $\bf u=(u_1,\ldots,u_\ell)$ is a solution of \eqref{eq:sys_nonlin} then, for each $i=1,\ldots,\ell$, $u_i\in H^1_{loc}(\o)$ is a solution of
$$-\Delta u_i=g_i\qquad\text{with \ }g_i:=-V_iu_i + \sum_{j=1}^\ell \beta_{ij}|u_j|^p|u_i|^{p-2}u_i.$$
Since $V_i\in L^{N/2}_{loc}(\o)$, $\beta_{ij}\in L^\infty_{loc}(\o)$,
$$|g_i|\leq|V_i||\bf u|+\sum_{j=1}^\ell |\beta_{ij}||\bf u|^{2p-1}$$
and $|\bf u|=|u_1|+\cdots+|u_\ell|\in L_{loc}^{2^*}(\o)$, we have that $g_i\in L^s_{loc}(\o)$ with $s:=\frac{2^*}{2p-1}\geq q:=\frac{2N}{N+2}$. So, by elliptic regularity, $u_i\in W^{2,q}_{loc}(\o)$. Furthermore,
\begin{align*} 
|\Delta u_i|\leq W_i|\bf u|\quad\text{a.e. in \ }\o,\qquad\text{with \ } W_i:=|V_i| + \sum_{j=1}^\ell |\beta_{ij}||\bf u|^{2p-2}\in L^{N/2}_{loc}(\o).
\end{align*}
Setting $W:=W_1+\cdots+W_\ell$, we have that
\begin{align}\label{eq:W}
|\Delta u_i|\leq |W|\,|\bf u|\qquad\text{a.e. in \ }\o,\quad\text{for every \ }i=1,\ldots,\ell,
\end{align}
and the result follows from Theorem \ref{uc:thm}.
\end{proof}

\begin{remark}
There are some situations where Theorem \ref{uc:thm} follows directly from known results for the single equation. For instance, if in Theorem \ref{uc:thm} we add the assumption that $u_i\geq 0$ for every $i=1,\ldots,\ell$, then $|\bf u|=u_1+\cdots+u_\ell$ and \eqref{eq:dif_ineqs} yields
$|\Delta |\bf u||\leq |\Delta u_1|+\cdots+|\Delta u_\ell|\leq \ell\,|W|\,|\bf u|$.
As a consequence, if every $u_i$ vanishes of infinite order at $x_0\in\o$ for every $i=1,\ldots,\ell$, then $|\bf u|$  vanishes of infinite order at $x_0\in\o$ and \cite[Theorem 6.3]{JK85} states that $\bf u\equiv 0$ in $\o$.
\end{remark}

\begin{proof}[Proof of Theorem \ref{thm:cooperative}]
Let $V_i\in L^\infty_{loc}(\o)$, $\beta_{ij}\in L^\infty_{loc}(\o)$ and $\nabla \beta_{ij}\in L^\infty_{loc}(\o)$ for every $i,j=1,\ldots,\ell$. If $\bf u=(u_1,\ldots,u_\ell)$ is a solution to the system \eqref{eq:sys_nonlin} then, by standard regularity arguments, $u_j\in L^\infty_{loc}(\o)$ and, as a consequence, $|u_j|^p$ and $\nabla(|u_j|^p)$ belong to $L^\infty_{loc}(\o)$. It follows that
$$c_i:=\sum_{j=1}^\infty\beta_{ij}|u_j|^p\in L^\infty_{loc}(\o)\qquad\text{and}\qquad\nabla c_i\in L^\infty_{loc}(\o).$$
Since $c_i>0$ by assumption, and $u_i$ satisfies
$$-\Delta u_i + V_iu_i =c_i|u_i|^{p-2}u_i,$$
the statement follows from \cite[Example 1.1 and Theorem 3(b)]{Rul18}.
\end{proof}

\begin{remark}[An answer to Question \ref{Q2} in the case of constant potentials]\label{remark:nicola}

We thank Nicola Soave for sharing with us the following elegant and simple argument, which answers Question \ref{Q2} in the negative, at least whenever the potentials are constant.  For simplicity, we set all the constants to one.

Let $1<s<2<r$, $u_0>0,$ and $u_1<0$ be such that
\begin{align*}
 \frac{u_1^2+u_0^2}{2}-\frac{u_0^{r}}{r} + \frac{u_0^{s}}{s}=0\quad \text{ and }\quad \delta:=\frac{2}{s}-\frac{2}{r}u_0^{r-s}>0.
\end{align*}
Note that
\begin{align*}
 T
 :=\int_0^{u_0}\frac{1}{\tau^{\frac{s}{2}}\sqrt{\frac{2}{s}+\tau^{2-s}-\frac{2}{r}\tau^{r-s}}}\, d\tau
 \leq \frac{1}{\sqrt{\delta}}\int_0^{u_0}\frac{1}{\tau^{\frac{s}{2}}}\, d\tau
 =\frac{1}{\sqrt{\delta}} \frac{u_0^{1-\frac{s}{2}}}{1-\frac{s}{2}}
 <\infty.
\end{align*}

Consider the following ODE problem
\begin{align}\label{neq}
 -u''+u = |u|^{r-2}u - |u|^{s-2}u\quad \text{ in }(0,\infty),\qquad u(0)=u_0,\qquad u'(0)=u_1.
\end{align}
We show the existence of a solution of \eqref{neq} such that $u=0$ for $t>T$; in particular, $u$ violates the unique continuation principle.

Note that the solution of \eqref{neq} satisfies that
\begin{align}\label{fi}
 \frac{|u'(t)|^2-u(t)^2}{2}+\frac{|u(t)|^{r}}{r}-\frac{|u(t)|^{s}}{s}=0\qquad\text{ for all }t>0.
\end{align}
Since $u_1<0$, then
\begin{align*}
 u'(t)=-\sqrt{ \frac{2}{s}|u(t)|^{s}+u(t)^2-\frac{2}{r}|u(t)|^{r}}
\end{align*}
for all $t>0$ such that $ \frac{2}{s}|u(t)|^{s}+u(t)^2-\frac{2}{r}|u(t)|^{r}>0$. Using the inverse function theorem, there is a function $v$ such that
\begin{align*}
u(v(\tau))=\tau\qquad \text{ and }\qquad
 v'(\tau)=-\frac{1}{\sqrt{ \frac{2}{s}\tau^{s}+\tau^2-\frac{2}{r}\tau^{r}}}
\end{align*}
for $\tau\in[0,u_0]$. This implies that $v(u_0)=0$ and
\begin{align*}
 v(0)=\int_{u_0}^0v'(\tau)\, d\tau=\int_0^{u_0}\frac{1}{\sqrt{ \frac{2}{s}\tau^{s}+\tau^2-\frac{2}{r}\tau^{r}}}\, d\tau
 =T<\infty.
\end{align*}
Since $u(v(0))=u(T)=0$, \eqref{fi} implies that $u'(T)=0$. Hence, we can simply set $u=0$ for $t>T$ and the claim follows.

\end{remark}

\section{Some applications}
\label{sec:applications}

In this section we prove Theorems \ref{thm:starshaped} and \ref{thm:no_minimum}.

\begin{proof}[Proof of Theorem \ref{thm:starshaped}]
Without loss of generality, we assume that $\o$ is strictly star-shaped with respect to the origin. Let $\bf u=(u_1,\ldots,u_\ell)$ be a solution to \eqref{eq:critical_system}. To prove Theorem \ref{thm:starshaped} we first derive a Pohozhaev identity. Let $p:=\frac{2^*}{2}$. By direct computation,
\begin{align}\label{eq:po1}
&\diver \left(\frac{1}{2}\sum\limits_{\substack{i,j=1\\j\neq i}}^\ell\beta_{ij}|u_j|^p|u_i|^{p}x\right)=p\sum\limits_{\substack{i,j=1\\j\neq i}}^\ell\beta_{ij}|u_j|^p|u_i|^{p-2}u_{i}(\nabla u_{i}\cdot x)+\frac{N}{2}\sum\limits_{\substack{i,j=1\\j\neq i}}^\ell\beta_{ij}|u_j|^p|u_i|^{p}, \\ \label{eq:po2}
&\diver\left(\sum\limits_{\substack{i=1}}^\ell\beta_{ii}|u_i|^{2p}x\right)=2p\sum\limits_{\substack{i=1}}^\ell\beta_{ii}|u_i|^{2p-2}u_{i}(\nabla u_{i}\cdot x)+N\sum\limits_{\substack{i=1}}^\ell\beta_{ii}|u_i|^{2p}, \\ \label{eq:po3}
&\diver\left(\sum\limits_{\substack{i=1}}^\ell (\nabla u_{i}\cdot x)\nabla u_{i}\right)= \diver\left(\sum\limits_{\substack{i=1}}^\ell \frac{|\nabla u_{i}|^{2}}{2}x\right)- \frac{N-2}{2}\sum\limits_{\substack{i=1}}^\ell |\nabla u_{i}|^{2}+\sum\limits_{\substack{i=1}}^\ell\Delta u_{i}(\nabla u_{i}\cdot x).
\end{align}
Adding equations \eqref{eq:po1}--\eqref{eq:po3} and using that $\bf u$ verifies \eqref{eq:critical_system} we get
\begin{align*}
\frac{N-2}{2}&\sum\limits_{\substack{i=1}}^\ell |\nabla u_{i}|^{2}-\frac{N}{2p}\sum_{i,j=1}^{\ell}\beta_{ij}|u_j|^p|u_i|^p \\
&=\diver\left(\sum_{i=1}^{\ell}\frac{|\nabla u_{i}|^{2}}{2}x-\sum_{i=1}^{\ell}(\nabla u_i\cdot x)\nabla u_i-\frac{1}{2p}\sum_{i,j=1}^{\ell}\beta_{ij}|u_j|^p|u_i|^p x\right).
\end{align*}
Since $\bf u$ solves \eqref{eq:critical_system}, integrating over $\Omega$ and using the divergence theorem we obtain
\begin{align*}
0&=\frac{N-2}{2}\sum_{i=1}^{\ell}\int_{\Omega} |\nabla u_i|^2-\frac{N-2}{2}\sum_{i,j=1}^{\ell}\int_{\Omega}\beta_{ij}|u_j|^p|u_i|^p \\
&=\int_{\partial\Omega}\left( \sum\limits_{{i=1}}^\ell \frac{|\nabla u_{i}|^{2}}{2}\sigma-\sum\limits_{{i=1}}^\ell (\nabla u_{i}\cdot \sigma)\nabla u_{i} -\frac{1}{2p}\sum\limits_{{i,j=1}}^\ell\beta|u_j|^p|u_i|^{p}\sigma\right)\cdot\nu \d \sigma,
\end{align*}
where $\nu$ is the outer unit normal at $\partial\Omega$. Finally, since $u_i=0$ on $\partial\o$ the previous identity becomes
\begin{align*}
0=\sum\limits_{\substack{i=1}}^\ell \int_{\partial\Omega}\frac{1}{2}\left| \frac{\partial u_{i}}{\partial\nu }\right|^{2}\nu\cdot\sigma \d \sigma.
\end{align*}
Now, as $\Omega$ is strictly star-shaped with respect to the origin, we have that $\nu\cdot\sigma >0$ for every $\sigma\in \partial \Omega$ and thus $\frac{\partial u_i}{\partial \nu}=0$ on $\partial \Omega$ for all $i=1,\ldots,\ell$. So extending every component $u_i$ by zero outside $\Omega$ we obtain a solution $\overline{\bf u}$ to the system \eqref{eq:critical_system} in the whole of $\rn$. By hypothesis, $\rn\smallsetminus \Omega$ has non-empty interior. Thus, $\overline{\bf u}$ vanishes in an open non-empty set, and Theorem 1.1 yields that $\bf u\equiv 0$ in $\Omega$. 
\end{proof}

\begin{proof}[Proof of Theorem \ref{thm:no_minimum}]
Since $\cN^\phi(\o)\subset\cN^\phi(\rn)$ (by trivial extension), we have that $c^\phi(\o)\geq c^\phi(\rn)$. To prove the opposite inequality take a sequence $\bf\vp_k=(\vp_{1,k},\ldots,\vp_{\ell,k})\in\cN^\phi(\rn)$ with $\vp_{i,k}\in\cC^\infty_c(\rn)$ and $\cJ(\bf\vp_k)\to c^\phi(\rn)$. Let $\xi\in\o\cap\F(G)$ and $r>0$ be such that the ball $B_r(\xi)$ centered at $\xi$ of radius $r$ is contained in $\o$.  Fix $\eps_k>0$ such that $\eps_kx\in B_r(0)$ for every $x \in\supp(\vp_{i,k})$ and $i=1,\ldots,\ell$. Define
$$\psi_{i,k}(x):=\eps_k^\frac{2-N}{2}\vp_{i,k}\Big(\frac{x-\xi}{\eps_k}\Big).$$
Then $\psi_{i,k}\in\cC^\infty_c(\o)$ and, since $\xi\in\F(G)$, the function $\bf\psi_k=(\psi_{1,k},\ldots,\psi_{\ell,k})$ is $\phi$-equivariant. Furthermore, as
$$\|\psi_{i,k}\|^2=\|\vp_{i,k}\|^2\qquad\text{and}\qquad\irn|\psi_{j,k}|^p|\psi_{i,k}|^p=\irn|\vp_{j,k}|^p|\vp_{i,k}|^p,$$
we have that $\bf\psi_k\in\cN^\phi(\o)$ and $\cJ(\bf\psi_k)=\cJ(\bf\vp_k)\to c^\phi(\rn)$. This shows that $c^\phi(\o)\leq c^\phi(\rn)$ and completes the proof of the first statement.

To prove the second statement by contradiction, assume that some $\bf u\in\cN^\phi(\o)$ satisfies $\cJ(\bf u)=c^\phi(\o)$. Then its trivial extension $\overline{\bf u}$ to $\rn$ belongs to $\cN^\phi(\rn)$ and satisfies $\cJ(\overline{\bf u})=c^\phi(\o)=c^\phi(\rn)$. Therefore, $\overline{\bf u}$ is a nontrivial solution of the system \eqref{eq:critical_system} in $\rn$ all of whose components vanish in $\rn\smallsetminus\o$, contradicting Theorem \ref{thm:main}. This shows that $c^\phi(\o)$ is not attained.
\end{proof}

\medskip
\medskip

\begin{flushleft}
\textbf{Mónica Clapp}\\
Instituto de Matemáticas\\
Universidad Nacional Autónoma de México \\
Campus Juriquilla\\
Boulevard Juriquilla 3001\\
76230 Querétaro, Qro., México\\
\texttt{monica.clapp@im.unam.mx} 

\medskip
\medskip

\textbf{Víctor Hernández-Santamaría and Alberto Saldaña}\\
Instituto de Matemáticas\\
Universidad Nacional Autónoma de México \\
Circuito Exterior, Ciudad Universitaria\\
04510 Coyoacán, Ciudad de México, México\\
\texttt{victor.santamaria@im.unam.mx}\\
\texttt{alberto.saldana@im.unam.mx}
\end{flushleft}

\end{document}